\DeclareMathOperator{\Aut}{Aut}
\begin{document}
\newtheorem*{thm*}{Theorem}
\newtheorem{theorem}{Theorem}[section]
\newtheorem{corollary}[theorem]{Corollary}
\newtheorem{lemma}[theorem]{Lemma}
\newtheorem{fact}[theorem]{Fact}
\newtheorem*{fact*}{Fact}
\newtheorem{proposition}[theorem]{Proposition}
\newtheorem{thmletter}{Theorem}
\renewcommand*{\thethmletter}{\Alph{thmletter}}
\theoremstyle{definition}
\newtheorem{question}[theorem]{Question}
\newtheorem{definition}[theorem]{Definition}
\theoremstyle{remark}
\newtheorem{remark}[theorem]{Remark}
\newtheorem{example}[theorem]{Example}

\newcommand{\set}[1][ ]{\ensuremath{ \lbrace #1 \rbrace}}
\newcommand{\bsl}{\ensuremath{\setminus}}
\newcommand{\grep}[2]{\ensuremath{\left\langle #1 | #2\right\rangle}}
\renewcommand{\ll}{\left\langle}
\newcommand{\rr}{\right\rangle}

\newcommand{\TheDim}{m}
\newcommand{\TheCodim}{k}
\newcommand{\TheRank}{n}
\newcommand{\GL}[2][\TheRank]{\ensuremath{\operatorname{GL_{#1}}(#2)}}
\newcommand{\Stab}{\operatorname{Stab}}
\newcommand{\Opp}{\ensuremath{\operatorname{Opp}(\Delta)}}
\newcommand{\op}{\operatorname{op}}
\newcommand{\mcU}{\ensuremath{\mathcal{U}}}
\newcommand{\mcP}{\ensuremath{\mathcal{P}}}
\newcommand{\mcH}{\ensuremath{\mathcal{H}}}
\newcommand{\FC}{\ensuremath{\operatorname{FC}_{\TheRank}}}
\newcommand{\CC}[2]{\ensuremath{\operatorname{CC}( #1 ,\, #2})}
\newcommand{\real}[1]{\ensuremath{\left\lVert #1\right\rVert}}

\title{Higher generating subgroups and Cohen--Macaulay complexes}
\author{Benjamin Br\"uck}
\begin{abstract}
We show how to find higher generating families of subgroups, in the sense of Abels and Holz, for groups acting on Cohen--Macaulay complexes. We apply this to groups with a BN-pair to prove higher generation by parabolic and Levi subgroups and describe higher generating families of parabolic subgroups in $\Aut(F_n)$.
\end{abstract}

\maketitle

\section{Introduction}
For a group $G$ with a family of subgroups $\mcH$, Abels and Holz in \cite{AH:Highergenerationsubgroups} defined the notion of ``higher generation'' of $G$ by $\mcH$. Whether or not $G$ is highly generated by $\mcH$ depends on the connectivity properties of the nerve of the covering of $G$ given by the set of cosets $\set[gH \mid g \in G,\, H \in \mcH]$ (for precise definitions see Section \ref{sec higher generation}).

Abels and Holz connected higher generating families to finiteness properties of groups, recent work in this direction can be found in \cite{SReg:finitenesslengthsome}. Furthermore, in \cite{MMV:Highergenerationsubgroup}, they were used to study the BNS-invariants of right-angled Artin groups; higher generation also arises in the context of Deligne complexes (\cite{CD:K1problemhyperplane} and \cite[Example A.7]{BFM+:braidedThompson'sgroups}) and braid groups (\cite{BFM+:braidedThompson'sgroups}).
However, the probably best-known example of higher generating families was given in \cite[Theorem 3.3]{AH:Highergenerationsubgroups}: The set of parabolic subgroups forms a higher generating family for any group with a BN-pair. To show this, Abels and Holz use the theory of Tits buildings. 

The aim of this note is to show that with small adjustments, the result of Abels and Holz can be extended to the general setting of groups acting appropriately on Cohen--Macaulay complexes. Cohen--Macaulayness is a combinatorial property of simplicial complexes defined via local connectivity conditions (see Section \ref{sec CM}). Our main result is Theorem \ref{thm higher generation by CM} which gives a criterion for obtaining higher generating families from group actions on Cohen--Macaulay complexes.  We also give a characterisation of the class of pairs $(G, \mcH)$ which can be obtained that way in Theorem \ref{thm characterisation CC CM}.

As an application of this, we construct new higher generating families: The first one is the family of Levi subgroups in groups with a BN-pair (see Theorem \ref{thm higher generation opposition complex} and Corollary \ref{cor high gen Levi}), the second one is the family of ``parabolic'' subgroups of $\Aut(F_n)$, the automorphism group of the free group (see Definition \ref{def parabolics in Aut(Fn)} and Theorem \ref{cor higher generation Aut}). The corresponding Cohen--Macaulay complexes are the opposition complex and the free factor complex, respectively.

I would like to thank my supervisor Kai-Uwe Bux for his support and Herbert Abels, Stephan Holz and Yuri Santos Rego for many interesting conversations about higher generation and coset complexes. I would also like to thank Russ Woodroofe for several remarks considering the relation between the homological and homotopical versions of Cohen--Macaulayness and for pointing out the reference \cite{Lut:TriangulatedManifoldsFew}. Thanks are also due to an anonymous referee for precise and helpful comments.

The author was supported by the grant BU 1224/2-1 within the Priority Programme 2026 ``Geometry at infinity'' of the German Science Foundation (DFG).

\section{Definitions and general results}
Throughout this text, we will often identify a simplicial complex $X$ and its geometric realisation $\real{X}$ if what is meant is clear from the context.
\subsection{Higher generating subgroups}
\label{sec higher generation}
\begin{definition}
Let $X$ be a set and $\mathcal{U}$ be a collection of subsets of $X$ such that \mcU\, covers $X$. Then the \emph{nerve} $N(\mathcal{U})$ of the cover $\mcU$ is the simplicial complex that has vertex set \mcU\, and where the vertices $U_0,\ldots ,U_k\in \mathcal{U}$ form a simplex if and only if $U_0\cap\ldots \cap U_k\not = \emptyset$.
\end{definition}

\begin{definition}
Let $G$ be a group and \mcH\, a family of subgroups of $G$.
\begin{enumerate}
\item The collection of cosets $\mcU\coloneqq \set[gH \mid g\in G, \, H\in \mcH]$ is a covering of $G$ and we define the \emph{coset complex} $\CC{G}{\mcH}$ to be the nerve $N(\mcU)$. This complex is endowed with a natural action of $G$ given by left multiplication.
\item We say that \mcH\, is \emph{$\TheDim$-generating} for  $G$ if $\CC{G}{\mcH}$ is $(\TheDim-1)$-connected, i.e. $\pi_i\CC{G}{\mcH}=\set[1]$ for all $i<\TheDim$.
\end{enumerate}
\end{definition}

Interesting examples of coset complexes are given by the ``coset poset'' of all subgroups of a finite group as studied in \cite{Bro:cosetposetprobabilistic} and \cite{SW:Ordercomplexescoset}.
The term ``higher generating subgroups'' was coined by Holz in \cite{Hol:EndlicheIdentifizierbarkeitvon} and is motivated by the following: The family $\mcH$ is 1-generating for $G$ if and only if the union of the subgroups in $\mcH$ generates $G$. It is 2-generating if and only if $G$ is the free product of the subgroups in $\mcH$ amalgamated along their intersections. Roughly speaking, the latter means that the union of the subgroups generates $G$ and that all relations that hold in $G$ follow from relations in these subgroups. $3$-generation can similarly be defined using identities among relations (see \cite[2.8]{AH:Highergenerationsubgroups}).

\begin{remark}
\label{remark k simplices}
The cosets $g_0H_0,\ldots, g_\TheCodim H_\TheCodim$ with $ g_i\in G$ and $H_i\in \mcH$ intersect non-trivially if and only if there is $g\in G$ such that 
\begin{equation*}
g_0H_0\cap\ldots\cap g_\TheCodim H_\TheCodim=g(H_0\cap\ldots\cap H_\TheCodim).
\end{equation*}
Hence, the set of $k$-simplices of $\CC{G}{\mcH}$ is in bijection with the set
\begin{align*}
\set[g(H_0\cap\ldots\cap H_\TheCodim) \mid   g\in G, \, H_i\in \mcH, \, H_i\not=H_j \text{ for } i\not=j].
\end{align*}
\end{remark}

Assume that $\mcH$ is a finite family of subgroups of $G$. Then $\CC{G}{\mcH}$ has dimension $|\mcH|-1$ and $\mcH$ itself is the vertex set of a  \emph{facet}, i.e. a maximal simplex, of the coset complex. We will write this facet as $C_\mcH$. This (and hence any other) facet is a \emph{fundamental domain} for the action of $G$; this means that
for all $0\leq k\leq |\mcH|-1$, the set of $k$-faces of $C_\mcH$ contains exactly one element of each $G$-orbit of $k$-simplices of
$\CC{G}{\mcH}$. The following converse of this observation is due to Zaremsky.

\begin{lemma}[see {\cite[Proposition A.5]{BFM+:braidedThompson'sgroups}}]
\label{lem detecting CC}
Let $G$ be a group acting by simplicial automorphisms on a simplicial complex $X$, with a single facet $C$ as fundamental domain.
Let
\begin{equation*}
\mathcal{P}\coloneqq \set[\Stab_G(v) \mid v \text{ is a vertex of } C].
\end{equation*}
Then the map
\begin{align*}
\psi:\CC{G}{\mathcal{\mcP}}&\to X\\
 g\Stab_G(v)&\mapsto g.v
\end{align*}
is an isomorphism of simplicial $G$-complexes.
\end{lemma}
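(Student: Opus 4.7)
The plan is to check the three standard features of $\psi$: it is well-defined and $G$-equivariant at the vertex level, it extends to a simplicial map via the coset description from Remark \ref{remark k simplices}, and it is bijective on simplices. The first two steps are essentially formal; the hard part will be injectivity on simplices, where the assumption that $C$ is a single-facet fundamental domain must be used in full strength.

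Well-definedness at vertices is immediate: if $g_1 \Stab_G(v) = g_2 \Stab_G(v)$, then $g_1^{-1}g_2$ fixes $v$ and so $g_1 v = g_2 v$. $G$-equivariance also follows directly from the formula, since $\psi(h g \Stab_G(v)) = h g v = h \cdot \psi(g\Stab_G(v))$. To see that $\psi$ sends simplices to simplices, I would invoke Remark \ref{remark k simplices}: a $k$-simplex of $\CC{G}{\mcP}$ can always be written in the form $g(\Stab_G(v_0) \cap \cdots \cap \Stab_G(v_k))$ with a common representative $g \in G$ and vertices $v_0,\ldots,v_k$ of $C$ whose stabilisers are pairwise distinct. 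Since $\{v_0,\ldots,v_k\}$ spans a face of $C$, the image $\{gv_0,\ldots,gv_k\}$ spans a face of $gC$ and is therefore a simplex of $X$.

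Surjectivity on simplices follows from the fundamental-domain hypothesis: every simplex of $X$ is a $G$-translate of some face of $C$, hence of the form $\{gv_0,\ldots,gv_k\}$ for some $g\in G$ and vertices $v_i$ of $C$, and this is the image of $g(\Stab_G(v_0)\cap\cdots\cap\Stab_G(v_k))$ under $\psi$.

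For injectivity, the main technical point, suppose two simplices $g(\bigcap_i \Stab_G(v_i))$ and $g'(\bigcap_j \Stab_G(v'_j))$ of $\CC{G}{\mcP}$ have the same image. Then $\{gv_0, \ldots, gv_k\} = \{g'v'_0, \ldots, g'v'_k\}$ as sets, so after relabelling there is a permutation $\sigma$ with $gv_i = g'v'_{\sigma(i)}$. Hence $v_i$ and $v'_{\sigma(i)}$ are two vertices of $C$ lying in the same $G$-orbit, and the fundamental-domain property applied to $0$-simplices forces $v_i = v'_{\sigma(i)}$. Writing $h = g^{-1}g'$, it follows that $h \in \Stab_G(v_i)$ for each $i$, so $g\Stab_G(v_i) = g'\Stab_G(v'_{\sigma(i)})$, which means the two coset-complex simplices have the same vertex set.
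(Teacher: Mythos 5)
The paper itself gives no proof of this lemma; it simply refers to \cite[Proposition A.5]{BFM+:braidedThompson'sgroups}. Your direct verification is correct and is essentially the standard argument from that reference: well-definedness and equivariance are formal, Remark \ref{remark k simplices} lets you write every simplex of $\CC{G}{\mcP}$ with a common translating element $g$, surjectivity on simplices is exactly the statement that every simplex of $X$ is a translate of a face of $C$, and your injectivity step correctly isolates the one place where the ``exactly one representative of each orbit'' clause of the fundamental-domain hypothesis is used. The only point you pass over is well-definedness of $\psi$ at the level of vertices of the coset complex: if two distinct vertices $v\neq w$ of $C$ happened to satisfy $\Stab_G(v)=\Stab_G(w)$, the coset $g\Stab_G(v)$ would not determine the image point $g.v$. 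This is resolved by the (implicit) convention that $\mcP$ is an indexed family whose members' cosets are kept disjoint --- the same convention the paper needs later when colouring coset complexes --- so it is a bookkeeping issue in the statement rather than a gap in your argument.
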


\subsection{The Cohen--Macaulay property}
For the remainder of this section, let $\mathbf{k}$ be a field or the ring of integers $\mathbb{Z}$.

\label{sec CM}
\begin{definition}
Let $X$ be a simplicial complex of dimension $d<\infty$. Then $X$ is \emph{Cohen--Macaulay over $\mathbf{k}$} if it is $(d-1)$-acyclic over $\mathbf{k}$, i.e. $\tilde{H}_i(X, \mathbf{k})=\set[0]$ for all $i<d$, and the link of every $s$-simplex is $(d-s-2)$-acyclic over $\mathbf{k}$.

$X$ is \emph{homotopy Cohen--Macaulay} if it is $(d-1)$-connected and the link of every $s$-simplex is $(d-s-2)$-connected.
\end{definition}

The notion of Cohen--Macaulayness over $\mathbf{k}$ was introduced in the mid-70s and came up in the study of finite simplicial complexes via their Stanley-Reisner rings (see \cite{Sta:CombinatoricsCommutativeAlgebra:}). The homotopical version was introduced by Quillen in \cite{Qui:Homotopypropertiesposet}. 
While it can be shown that ``being Cohen--Macaulay over $\mathbf{k}$'' only depends on the geometric realisation $\real{X}$ and not on its specific triangulation, the homotopical version is not a topological invariant but a property of the simplicial complex $X$ itself.
One has implications
\begin{align*}
&\text{homotopy CM} & \Rightarrow && \text{CM over }\mathbb{Z} &&\Rightarrow &&\text{CM over any field } \mathbf{k},
\end{align*}
which are all strict.
For more details on Cohen--Macaulayness and its connections to other combinatorial properties of simplicial complexes, see \cite{Bjo:Topologicalmethods}. 
We will talk about examples of complexes having these properties in Section \ref{sec Higher generation and CM} and Section \ref{sec:applications}.

\begin{definition}
A topological space is \emph{$d$-spherical} if it is homotopy equivalent to a wedge of $d$-spheres; as a convention, we consider a singleton to be homotopy equivalent to a (trivial) wedge of $n$-spheres for all $n$.
\end{definition}
\begin{remark}
By the Whitehead theorem, a $d$-dimensional complex is $d$-spherical if and only if it is $(d-1)$-connected.
\end{remark}

An advantage of a complex that is Cohen--Macaulay over one that is merely spherical is that it allows for inductive methods using its local structure. This is also what we will make use of in the proof of the following lemma.

\begin{lemma}
\label{lem spherical complement}
Let $X$ be a $d$-dimensional complex and let $X_s:=\real{X}\bsl \lVert X^{(s)} \rVert $ denote the complement of the $s$-skeleton of $\real{X}$. The following holds true:
\begin{enumerate}
\item If $X$ is Cohen--Macaulay over $\mathbf{k}$, the homology with $\mathbf{k}$-coefficients of $X_s$ is concentrated in dimension $d-s-1$, i.e. $\tilde{H}_i(X_s, \mathbf{k})$ is trivial if $i\not=d-s-1$.
\item If $X$ is homotopy Cohen--Macaulay, $X_s$ is $(d-s-1)$-spherical.
\end{enumerate}
\end{lemma}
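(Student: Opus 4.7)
The plan is to run an ascending induction on $s$ from $s=-1$, where the base case $X_{-1}=\real{X}$ is precisely the Cohen--Macaulay (resp.\ homotopy Cohen--Macaulay) hypothesis on $X$. Before setting up the induction, I would make one preliminary reduction that handles the top-dimensional vanishing for free: $X_s$ deformation retracts onto the subcomplex $Y_s$ of the barycentric subdivision $X'$ spanned by the barycenters $\hat\tau$ of simplices of dimension $>s$ (send a point in the relative interior of a chain simplex $(\tau_0 <\cdots<\tau_k)^\circ$, with $\dim\tau_k>s$, onto the sub-face spanned by those $\hat{\tau_i}$ with $\dim\tau_i>s$). Since $Y_s$ is the order complex of the poset of simplices of dimension $>s$, it has dimension $d-s-1$, so $\tilde H_i(X_s;\mathbf{k}) = 0$ for $i>d-s-1$ is immediate.

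For the inductive step, I would analyse the long exact sequence of the pair $(X_{s-1}, X_s)$. The complement $X_{s-1}\setminus X_s$ is the disjoint union of the open $s$-simplices $\sigma^\circ$, and the relative (co)homology is computed by excision. Working in the barycentric subdivision is convenient: the open stars $\operatorname{St}(\hat\sigma)\subset X'$ of the barycenters of the $s$-simplices are pairwise disjoint (a chain in $X$ cannot contain two distinct simplices of the same dimension), each $\operatorname{St}(\hat\sigma)$ is contractible, and $\operatorname{St}(\hat\sigma)\setminus\sigma^\circ$ deformation retracts onto $\operatorname{lk}(\sigma,X)$. Excision and the long exact sequence of each pair $(\operatorname{St}(\hat\sigma),\operatorname{St}(\hat\sigma)\setminus\sigma^\circ)$ then give
\begin{equation*}
H_n(X_{s-1}, X_s;\mathbf{k})\ \cong\ \bigoplus_{\sigma\in X^{(s)}\setminus X^{(s-1)}}\tilde H_{n-1}(\operatorname{lk}(\sigma,X);\mathbf{k}),
\end{equation*}
and the Cohen--Macaulay hypothesis on $X$ concentrates this in degree $n=d-s$.

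Feeding this into the long exact sequence of the pair together with the inductive assumption that $\tilde H_*(X_{s-1};\mathbf{k})$ is concentrated in degree $d-s$, and using the top-degree vanishing $H_{d-s}(X_s;\mathbf{k})=0$ from the preliminary reduction, the sequence collapses to give $H_i(X_s;\mathbf{k})=0$ for $0<i<d-s-1$ and realises $H_{d-s-1}(X_s;\mathbf{k})$ as the cokernel in the short exact sequence
\begin{equation*}
0\to H_{d-s}(X_{s-1};\mathbf{k})\to\bigoplus_\sigma\tilde H_{d-s-1}(\operatorname{lk}(\sigma,X);\mathbf{k})\to H_{d-s-1}(X_s;\mathbf{k})\to 0,
\end{equation*}
proving part~(1). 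For part~(2) the same induction runs with the long exact sequence of homotopy groups of the pair: the local model $(\operatorname{St}(\hat\sigma),\operatorname{St}(\hat\sigma)\setminus\sigma^\circ)\simeq(\operatorname{Cone}(\operatorname{lk}(\sigma,X)),\operatorname{lk}(\sigma,X))$ has $\pi_i=\pi_{i-1}(\operatorname{lk}(\sigma,X))$, which vanishes for $i\leq d-s-1$ by the homotopy Cohen--Macaulay hypothesis on links; patching the contributions of the disjoint neighbourhoods via iterated excision / Blakers--Massey shows the pair $(X_{s-1},X_s)$ is $(d-s-1)$-connected, so the long exact sequence of homotopy groups transfers the $(d-s-1)$-connectedness of $X_{s-1}$ to $(d-s-2)$-connectedness of $X_s$, which together with $\dim Y_s=d-s-1$ forces $X_s$ to be $(d-s-1)$-spherical.

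The main obstacle I anticipate is setting up the excision step cleanly. It is essential that the neighbourhoods of the open $s$-simplices be pairwise disjoint, which forces working in the barycentric subdivision $X'$ rather than in $X$ itself (the open stars $\operatorname{st}(\sigma)$ in $X$ overlap whenever two $s$-simplices lie in a common higher-dimensional simplex), and the deformation retraction $\operatorname{St}(\hat\sigma)\setminus\sigma^\circ\simeq\operatorname{lk}(\sigma,X)$ has to be verified using join coordinates in $X'$. Once the identification of $H_n(X_{s-1},X_s;\mathbf{k})$ with $\bigoplus_\sigma\tilde H_{n-1}(\operatorname{lk}(\sigma,X);\mathbf{k})$ is in place, the rest is routine long-exact-sequence bookkeeping.
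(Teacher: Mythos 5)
Your proposal is correct and follows essentially the same route as the paper: induction on $s$ starting from $X_{-1}=\real{X}$, the identification of the local topology of $X_{s-1}$ near an open $s$-simplex with the cone on $\operatorname{lk}(\sigma,X)$, and the retraction onto a $(d-s-1)$-dimensional subcomplex of the barycentric subdivision to kill the top degree. The only (harmless) difference is packaging: the paper adjoins the open $s$-simplices one at a time and applies the gluing lemma to each inclusion $X'\hookrightarrow X'\cup\mathring{\sigma}$, whereas you adjoin them all at once and read the same information off the long exact sequence of the pair $(X_{s-1},X_s)$ via excision over the disjoint stars, which additionally exhibits $H_{d-s-1}(X_s;\mathbf{k})$ as an explicit cokernel.
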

\begin{proof}
The proof of the two statements is completely parallel and will be done by induction on $s$. Setting $X_{-1}\coloneqq\real{X}$, the statements hold  for $s=-1$ as $\real{X}$ itself is assumed to be $(d-1)$-acyclic or $(d-1)$-connected, respectively. For all $s$, the space $X_{s-1}$ is the union of $X_{s}$ and the open $s$-simplices of $\real{X}$, so we will successively adjoin  these simplices to $X_s$ while keeping track of the homotopy type. Assume that we already constructed $X'$ as the union of $X_s$ and a set of open $s$-simplices of $\real{X}$. Then for every $s$-simplex $\sigma$ in $\real{X}$ that is not contained in $X'$, 
there is an open contractible neighbourhood $U$ of the interior of $\sigma$ in $X''\coloneqq X'\cup  \mathring{\sigma}$ such that $U\cap X'=U\bsl \mathring{\sigma}$ is homotopy equivalent to the link of $\sigma$ in $X$.
As $X$ is Cohen--Macaulay, this link is $(d-s-2)$-acyclic in the homological and $(d-s-2)$-connected in the homotopical setting.
This means that $X''$ can be constructed gluing together $X'$ 
and $U$, which is contractible, along the open subset $U\bsl \mathring{\sigma}$, which is $(d-s-2)$-acyclic or $(d-s-2)$-connected.  
Hence, the inclusion $X'\hookrightarrow X''$ induces for all $i\leq d-s-2$ an isomorphism on homology groups $\tilde{H}_i(\cdot, \mathbf{k)}$ or homotopy groups $\pi_i$, respectively.

By induction, we can conclude that if $X$ is Cohen--Macaulay over $\mathbf{k}$, we have $\set[0]=\tilde{H}_i(X_{s-1},\mathbf{k})\cong \tilde{H}_i(X_s,\mathbf{k})$ and if it is homotopy Cohen--Macaulay, we have $\set[1]=\pi_i(X_{s-1})\cong \pi_i(X_s)$ for $i\leq d-s-2$. Noting that the complement of the $s$-skeleton of any simplicial complex of dimension $d$ is homotopy equivalent to a complex of dimension $(d-s-1)$ (contract all the simplices of dimension $(s+1)$ to their barycenters), the result follows.
\end{proof}

A simplicial complex is called \emph{pure} if all of its facets have the same dimension. A pure simplicial complex $X$ is called a \emph{chamber complex} (or \emph{strongly connected}) if every pair of facets $\sigma,\tau\in X$ can be connected by a sequence of facets $\sigma=\nolinebreak\tau_1,\ldots, \tau_k=\tau$ such that for all $1\leq i \leq k$, the intersection of $\tau_i$ and $\tau_{i+1}$ is a face of codimension $1$. The facets of a chamber complex are also called \emph{chambers}.

\begin{remark}
Every Cohen--Macaulay complex is pure and a chamber complex (see e.g. \cite[Proposition 11.7]{Bjo:Topologicalmethods}). The preceding lemma is a generalisation of this well-known fact in the following sense: Let $X$ be a pure, $d$-dimensional simplicial complex, $d\geq 1$. Define a graph $\Gamma$ whose vertices  are given by the facets of $X$ and where two vertices are joined by an edge if and only if the corresponding facets intersect in a face of codimension $1$. The graph $\Gamma$, which is also called the \emph{chamber graph of $X$}, is homotopy equivalent to the complement of the $(d-2)$-skeleton of $X$. Furthermore, $X$ is a chamber complex if and only if $\Gamma$ is connected, which is equivalent to $\tilde{H}_0(\Gamma)= \set[0]$. So if we assume that $X$ is Cohen--Macaulay, Lemma \ref{lem spherical complement} implies that it is a chamber complex.
\end{remark}

\subsection{Higher generation by actions on Cohen--Macaulay complexes}

\label{sec Higher generation and CM}
We now want to combine Lemma \ref{lem detecting CC} with the observations of the preceding subsection in order to obtain higher generating families of subgroups for groups acting on Cohen--Macaulay complexes. Our main result here is as follows.

\begin{theorem}
\label{thm higher generation by CM}
Let $G$ be a group acting by simplicial automorphisms on a simplicial complex $X$, with a single facet $C$ as fundamental domain. If $X$ is homotopy Cohen--Macaulay and has dimension $d$, the set
\begin{equation*}
\mathcal{P}_\TheCodim\coloneqq \set[\Stab_G(\sigma)\mid \sigma \text{ is a $\TheCodim$-dimensional face of } C]
\end{equation*}
is $(d-\TheCodim)$-generating for all $0\leq \TheCodim\leq d$. Furthermore, the corresponding coset complex $\CC{G}{\mcP_k}$ is $(d-k)$-spherical.
\end{theorem}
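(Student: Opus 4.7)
The plan is to realise $\CC{G}{\mcP_k}$ explicitly as a complex derived from $X$ via Lemma~\ref{lem detecting CC}, and then to compute its homotopy type using the nerve lemma together with Lemma~\ref{lem spherical complement}.

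First, I would introduce an auxiliary simplicial complex $Y$ whose vertices are the $k$-faces of $X$ and whose simplices are finite sets of $k$-faces lying in a common facet of $X$. The $G$-action on $X$ induces a simplicial action on $Y$, under which the $k$-faces of $C$ span a facet $C_k$; in fact the facets of $Y$ correspond bijectively to the facets of $X$. Because $C$ is a single facet as fundamental domain for the action on $X$, the same is true of $C_k$ for the action on $Y$: every $G$-orbit of simplices in $Y$ has a representative in $C_k$ (push the ambient facet of $X$ back to $C$), and uniqueness of this representative follows from the fact that two $k$-faces of $C$ in the same $G$-orbit must already coincide. The stabiliser in $G$ of a vertex of $C_k$ coincides with the stabiliser of the corresponding $k$-face of $X$, so the vertex stabilisers of $C_k$ are precisely the elements of $\mcP_k$. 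Applying Lemma~\ref{lem detecting CC} to $Y$ then yields an isomorphism $\CC{G}{\mcP_k}\cong Y$ of simplicial $G$-complexes.

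Second, I would determine the homotopy type of $Y$ with the nerve lemma. Cover $X_{k-1}=\real{X}\bsl\real{X^{(k-1)}}$ by the open stars $\operatorname{st}(\tau)\subseteq\real{X}$ of the $k$-faces $\tau$ of $X$. Each such open star is contractible, and for any collection $\tau_0,\ldots,\tau_l$ the intersection $\operatorname{st}(\tau_0)\cap\ldots\cap\operatorname{st}(\tau_l)$ is either empty or equals $\operatorname{st}(\tau_0\cup\ldots\cup\tau_l)$, hence is contractible whenever non-empty. This intersection is non-empty precisely when the $\tau_i$ lie in a common facet of $X$, which is the defining condition for a simplex of $Y$. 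The nerve of this cover is therefore $Y$, and the nerve lemma gives $Y\simeq X_{k-1}$. Combined with Lemma~\ref{lem spherical complement}, which under the homotopy Cohen--Macaulay hypothesis tells us that $X_{k-1}$ is $(d-k)$-spherical, this shows that $\CC{G}{\mcP_k}$ is $(d-k)$-spherical and in particular $(d-k-1)$-connected, so $\mcP_k$ is $(d-k)$-generating.

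The main obstacle I anticipate is the bookkeeping in the first step: one has to verify that $C_k$ really is a single facet as fundamental domain for $Y$ \emph{at every dimension simultaneously}, and that vertex stabilisers for the action on $Y$ agree with those for the action on $X$. Both statements are direct translations of the corresponding properties of $C$ inside $X$, but they must be spelled out carefully before Lemma~\ref{lem detecting CC} can be invoked. The nerve-lemma computation is essentially automatic once the cover of $X_{k-1}$ by open stars of $k$-faces is in place.
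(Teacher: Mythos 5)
Your proposal is correct and follows essentially the same route as the paper: both identify $\CC{G}{\mcP_\TheCodim}$ with the complex whose simplices are the sets of $\TheCodim$-faces of $X$ lying in a common facet, show that this complex is homotopy equivalent to the complement $X_{\TheCodim-1}$ of the $(\TheCodim-1)$-skeleton, and conclude with Lemma~\ref{lem spherical complement}. The only difference is technical: where you invoke the nerve lemma for the cover of $X_{\TheCodim-1}$ by open stars of $\TheCodim$-faces, the paper instead passes through the induced subcomplex of the barycentric subdivision spanned by the barycenters of simplices of dimension at least $\TheCodim$.
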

\begin{proof}
By Lemma \ref{lem detecting CC}, we can identify $X$ with the coset complex \CC{G}{\mcP_0}.

As $C$ is a fundamental domain for the action of $G$, the stabiliser of a $\TheCodim$-face $F$ of $C$ is equal to the intersection of the stabilisers of all the vertices of $F$. Hence, the elements of $\mcP_\TheCodim$ 
are given by all the intersections of $(\TheCodim+1)$ pairwise distinct elements from $\mcP_0$.

By Remark \ref{remark k simplices}, the vertices of $\CC{G}{\mcP_\TheCodim}$ are in one-to-one correspondence with the $k$-simplices of $\CC{G}{\mcP_0}\cong X$. Moreover, a set of vertices in  $\CC{G}{\mcP_\TheCodim}$ forms a simplex if and only if the corresponding $k$-simplices in $X$ are all faces of one common facet.
It follows that the geometric realisation $\real{\CC{G}{\mcP_\TheCodim}}$ is homotopy equivalent to $\real{Y}$, where $Y$ is the induced subcomplex of the barycentric subdivision $\mathcal{B}(X)$ whose vertices are the barycenters of all simplices of  $X$ that have dimension greater or equal to $k$. 

The complex $\real{Y}$ is homotopy equivalent to the complement of the $(k-1)$-skeleton of $\real{X}$. As $X$ is Cohen--Macaulay, we can use Lemma \ref{lem spherical complement} to conclude that $\CC{G}{\mcP_\TheCodim}$ is $(d-\TheCodim)$-spherical. This finishes the proof.
\end{proof}

Before we apply this theorem to obtain higher generating families of subgroups for specific examples in the next section, we now characterise the class of pairs $(G, \mcH)$ which can be obtained using Theorem \ref{thm higher generation by CM}: By Lemma \ref{lem detecting CC}, the conditions of Theorem \ref{thm higher generation by CM} are fulfilled if and only if $\CC{G}{\mcP_0}$ is homotopy Cohen--Macaulay. We will give an alternative characterisation of this condition for coset complexes.

A pure simplicial complex $X$ of dimension $d$ is called \emph{coloured} (or \emph{completely balanced}) if there is a map $c:X^{(0)}\to \set[0,\ldots, d]$ restricting to a bijection on each facet. In this setting, for each $J\subseteq \set[0,\ldots,d]$, let $X_J$ be the induced subcomplex of $X$ with vertex set $c^{-1}(J)$. As stated below, the following result is due to Walker:

\begin{theorem}[{\cite[Theorem 5.2]{BWW:sequentiallyCohenMacaulay}}, {\cite[Theorem 11.14]{Bjo:Topologicalmethods}}]
\label{thm characterisation pure CM}
Let $X$ be a pure $d$-dimensional coloured complex. Then $X$ is Cohen--Macaulay over $\mathbf{k}$ if and only if $X_J$ is $(|J|-2)$-acyclic over $\mathbf{k}$ for every $J\subseteq \set[0,\ldots, d]$. It is homotopy Cohen--Macaulay if and only if $X_J$ is $(|J|-2)$-connected for every $J\subseteq \set[0,\ldots, d]$.
\end{theorem}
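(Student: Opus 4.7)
My plan is to prove each direction by induction on the dimension $d$, with the homological and homotopical arguments running in parallel. The base case $d=0$ is immediate: then $X$ is a non-empty disjoint union of points and both conditions reduce to $X$ being non-empty.

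The technical heart of the induction is a long exact sequence (resp.\ a homotopy pushout) comparing $X_J$ with $X_{J\cup\{i\}}$ for any $i\notin J$. Since $X$ is coloured, every simplex of $X_{J\cup\{i\}}$ outside $X_J$ contains exactly one vertex of colour $i$; hence $X_{J\cup\{i\}}$ is obtained from $X_J$ by attaching, for each vertex $v$ of colour $i$, the cone $v*\operatorname{lk}_X(v)_J$ along $\operatorname{lk}_X(v)_J$. A short check using that a coloured simplex has at most one vertex per colour shows that the closed stars of distinct vertices of colour $i$ meet only inside $X_J$, so the quotient $X_{J\cup\{i\}}/X_J$ is homotopy equivalent to the wedge $\bigvee_v \Sigma\operatorname{lk}_X(v)_J$. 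This yields the long exact sequence
\begin{equation*}
\cdots \to \tilde H_n(X_J) \to \tilde H_n(X_{J\cup\{i\}}) \to \bigoplus_v \tilde H_{n-1}\bigl(\operatorname{lk}_X(v)_J\bigr) \to \tilde H_{n-1}(X_J) \to \cdots.
\end{equation*}

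For the forward direction I would induct downward on $|J|$: the case $|J|=d+1$ is the hypothesised $(d-1)$-acyclicity of $X$. For smaller $J$, pick $i\notin J$; each link $\operatorname{lk}_X(v)$ is a coloured CM complex of dimension $d-1$, so by the outer induction $\operatorname{lk}_X(v)_J$ is $(|J|-2)$-acyclic, and combined with the downward-inductive $(|J|-1)$-acyclicity of $X_{J\cup\{i\}}$ the exact sequence forces $X_J$ to be $(|J|-2)$-acyclic. For the reverse direction, $X=X_{\{0,\ldots,d\}}$ is already $(d-1)$-acyclic by hypothesis, so only the link condition needs checking. Using the iterated-link formula $\operatorname{lk}(v_0\cdots v_s)=\operatorname{lk}_{\operatorname{lk}(v_0)}(v_1\cdots v_s)$, it suffices to show that $\operatorname{lk}_X(v)$ is CM for every vertex $v$; by the outer induction this reduces to verifying that $\operatorname{lk}_X(v)_L$ is $(|L|-2)$-acyclic for every $L\subseteq\{0,\ldots,d\}\setminus\{c(v)\}$. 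Running the same exact sequence with $i=c(v)$ and $J=L$, both outer terms now vanish in the relevant range by hypothesis, forcing the middle direct sum to vanish in that range; vanishing of a direct sum of groups implies vanishing of each summand, giving the required acyclicity.

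I expect the main obstacle to be making the homotopical version rigorous. The long exact sequence must be replaced by a Seifert--van Kampen / homotopy-pushout argument, presenting $X_{J\cup\{i\}}$ as the pushout of $X_J\hookleftarrow\bigsqcup_v\operatorname{lk}_X(v)_J\to\bigsqcup_v\{v\}$, with standard connectivity estimates for pushouts taking the role of the exact sequence. The induction is structurally identical, but low-dimensional simple-connectivity questions (notably the step crossing $|J|=2$) need the usual care with basepoints, especially when $V_i$ is infinite.
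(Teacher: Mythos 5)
First, a point of reference: the paper does not prove this statement at all --- it is quoted from the literature (Walker, via \cite{Bjo:Topologicalmethods} and \cite{BWW:sequentiallyCohenMacaulay}), so there is no in-paper proof to compare against. Judged on its own, your homological argument is correct and is essentially the standard proof: the decomposition $X_{J\cup\{i\}}=X_J\cup\bigcup_v v*\operatorname{lk}_X(v)_J$ with distinct stars meeting only in $X_J$ is valid (purity plus the colouring gives at most one vertex per colour in each simplex), the long exact sequence of the pair identifies $H_*(X_{J\cup\{i\}},X_J)$ with $\bigoplus_v\tilde H_{*-1}(\operatorname{lk}_X(v)_J)$, and the double induction closes both directions. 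The homotopical \emph{forward} direction also goes through: the links are $(|J|-2)$-connected by the outer induction, so $X_{J\cup\{i\}}$ is obtained from $X_J$ up to homotopy by attaching cells of dimension $\geq|J|$, and the connectivity of $X_J$ follows from that of $X_{J\cup\{i\}}$.

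The genuine gap is in the homotopical \emph{reverse} direction. Your homological argument there hinges on the implication ``the direct sum $\bigoplus_v\tilde H_n(\operatorname{lk}_X(v)_J)$ vanishes, hence each summand vanishes,'' and this step has no homotopy-theoretic counterpart. Connectivity estimates for pushouts only go one way: high connectivity of the maps $\operatorname{lk}_X(v)_L\to\{v\}$ forces high connectivity of $X_L\hookrightarrow X_{L\cup\{i\}}$, but knowing that $X_L$ is $(|L|-2)$-connected and $X_{L\cup\{i\}}$ is $(|L|-1)$-connected does \emph{not} let you recover the connectivity of an individual link. Concretely, for $|L|\geq 3$ you must show $\pi_1(\operatorname{lk}_X(v)_L)=1$; van Kampen applied to your decomposition only says that $\pi_1(X_{L\cup\{i\}})$ is the quotient of $\pi_1(X_L)$ by the normal closure of the images of the $\pi_1(\operatorname{lk}_X(w)_L)$, and both sides being trivial is perfectly compatible with a link having nontrivial (necessarily perfect, since your homological argument does give $\tilde H_1=0$) fundamental group --- coning off a non-simply-connected acyclic subcomplex of a simply connected complex can produce a $2$-connected complex. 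So the simple connectivity of the links must be extracted by some other mechanism, and your proposal does not supply one. The difficulty you do flag (basepoints and the step crossing $|J|=2$) is not the real obstruction: $\pi_0$ and $\tilde H_0$ agree, so the low cases are handled by the homological argument; it is the $\pi_1$ of the individual links in the converse direction that your plan cannot reach.
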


Every finite-dimensional coset complex is a pure simplicial complex which can be given a colouring 
\begin{equation*}
c:\CC{G}{\set[H_0, \ldots, H_d]}\to \set[0,\ldots, d]
\end{equation*}
by setting $c(g H_i)\coloneqq i$. 
Hence, the following is an immediate consequence of Theorem \ref{thm characterisation pure CM}:

\begin{theorem}
\label{thm characterisation CC CM}
Let $G$ be a group and \mcH\, be a finite family of subgroups of $G$. 
\begin{enumerate}
\item $\CC{G}{\mcH}$ is Cohen--Macaulay over $\mathbf{k}$ if and only if for all $\mcH'\subseteq \mcH$, the coset complex $\CC{G}{\mcH'}$ is $(|\mcH'|-2)$-acyclic over $\mathbf{k}$.
\item $\CC{G}{\mcH}$ is homotopy Cohen--Macaulay if and only if every $\mcH'\subseteq \mcH$ is $(|\mcH'|-1)$-generating for $G$.
\end{enumerate}
\end{theorem}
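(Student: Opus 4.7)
The plan is to derive both parts of the theorem as essentially immediate corollaries of Walker's Theorem~\ref{thm characterisation pure CM}, once the coloured structure on coset complexes is understood well enough. First I would check that $\CC{G}{\mcH}$ is a pure simplicial complex of dimension $d=|\mcH|-1$ (this follows from Remark~\ref{remark k simplices} together with the observation that $\mcH$ itself is the vertex set of a facet, as noted just above Lemma~\ref{lem detecting CC}). Then, writing $\mcH = \{H_0,\ldots,H_d\}$, the map $c(gH_i)\coloneqq i$ is well-defined: indeed, $gH_i = g'H_j$ forces $H_i=H_j$, hence $i=j$. This $c$ restricts to a bijection on every facet since facets have exactly $d+1$ vertices and must meet each colour class (because any facet is a $G$-translate of the canonical fundamental facet $C_\mcH$). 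So the coset complex is coloured in the sense of Walker's theorem.

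The key step is then to identify, for each $J\subseteq \{0,\ldots,d\}$, the induced subcomplex $\CC{G}{\mcH}_J$ with the smaller coset complex $\CC{G}{\mcH'}$ where $\mcH'\coloneqq \{H_i \mid i\in J\}$. By definition, the vertices of $\CC{G}{\mcH}_J$ are precisely the cosets $gH_i$ with $i\in J$, which are exactly the vertices of $\CC{G}{\mcH'}$. Both complexes have the same simplices: a collection of such cosets spans a simplex in $\CC{G}{\mcH}_J$ iff it spans one in $\CC{G}{\mcH}$ iff the cosets have non-empty common intersection, which is precisely the simplex condition in $\CC{G}{\mcH'}$. Thus $\CC{G}{\mcH}_J\cong \CC{G}{\mcH'}$ as simplicial complexes, and $|J|=|\mcH'|$.

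With this identification in hand, part (1) is immediate: Walker's criterion says $\CC{G}{\mcH}$ is Cohen--Macaulay over $\mathbf{k}$ iff $\CC{G}{\mcH}_J$ is $(|J|-2)$-acyclic for every $J$, and under the identification above this becomes the stated condition that $\CC{G}{\mcH'}$ be $(|\mcH'|-2)$-acyclic for every $\mcH'\subseteq \mcH$. For part (2), the homotopical half of Walker's theorem gives the analogous condition that $\CC{G}{\mcH'}$ be $(|\mcH'|-2)$-connected, which by the definition of higher generation in Section~\ref{sec higher generation} is the statement that $\mcH'$ is $(|\mcH'|-1)$-generating for $G$.

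I do not expect a real obstacle here: the only thing that needs a small argument is the identification $\CC{G}{\mcH}_J\cong \CC{G}{\mcH'}$, and even that reduces to unwinding definitions. The only subtlety worth a sentence is that one should allow the degenerate cases $|\mcH'|\in\{0,1\}$ (where $(|\mcH'|-2)$-acyclicity/connectivity is vacuous, matching the fact that a coset complex on $\leq 1$ subgroup is either empty or a disjoint union of points) so that the induction implicit in Walker's theorem goes through without boundary issues.
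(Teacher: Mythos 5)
Your proposal is correct and follows exactly the paper's route: the paper likewise observes that a finite-dimensional coset complex is pure and coloured via $c(gH_i)\coloneqq i$, identifies the colour-selected subcomplexes with the smaller coset complexes $\CC{G}{\mcH'}$, and derives both parts as immediate consequences of Walker's Theorem~\ref{thm characterisation pure CM}. Your write-up merely spells out the identification $\CC{G}{\mcH}_J\cong\CC{G}{\mcH'}$ in more detail than the paper, which states the conclusion as immediate.
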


Being a coset complex imposes rather strong restrictions: In addition to being coloured, every such complex is endowed with a facet transitive group action. One might ask whether in this setting, Cohen--Macaulayness implies already stronger combinatorial conditions like shellability. A finite complex is \emph{shellable} if and only if the set of its facets admits a sufficiently nice ordering, called a \emph{shelling}; for the precise definition, see \cite[Section 11.2]{Bjo:Topologicalmethods}. In general, being shellable is strictly stronger than being homotopy Cohen--Macaulay. Buildings form a class of coset complexes which are shellable (see Section \ref{sec:applications} and\cite{Bjo:Somecombinatorialalgebraic}). The following example however shows that there are also coset complexes which are Cohen--Macaulay over $\mathbb{Z}$, but are not homotopy Cohen--Macaulay and so in particular not shellable.

Let $\operatorname{Alt}_5$ be the alternating group on the set $\set[1,2,3,4,5]$ and consider the following subgroups:
\begin{align*}
H_1&\coloneqq \Stab_{\operatorname{Alt}_5}(\set[2]), \\
H_2&\coloneqq N_{\operatorname{Alt}_5}(\,\langle (1,2,3,4,5) \rangle\,), \\
H_3&\coloneqq N_{\operatorname{Alt}_5}(\,\langle (1,3,5) \rangle\,),
\end{align*}
where $\Stab_{\operatorname{Alt}_5}$ and $N_{\operatorname{Alt}_5}$ denote stabiliser and normaliser in $\operatorname{Alt}_5$. The group $H_1$ is isomorphic to $\operatorname{Alt}_4$ and $H_2$ and $H_3$ are isomorphic to the dihedral groups $D_5$ and $D_3$, respectively.
Let $\mcH\nolinebreak\coloneqq\set[H_1,H_2,H_3]$. The coset complex $\CC{\operatorname{Alt}_5}{\mcH}$ has dimension two and consists of $21$ vertices, $80$ edges and $60$ two-simplices.
This complex was first found by Oliver, an explicit description of it as  a coset complex can be found in \cite{Seg:Groupactionsfinite}. For further details and a picture, see \cite[Section 7.3]{Lut:TriangulatedManifoldsFew}; note that $\CC{\operatorname{Alt}_5}{\mcH}$ is isomorphic to the complex $N_0$ in \cite{Lut:TriangulatedManifoldsFew}.

\begin{lemma}
The coset complex $\CC{\operatorname{Alt}_5}{\mcH}$ is Cohen--Macaulay over $\mathbb{Z}$, but is not homotopy Cohen--Macaulay.
\end{lemma}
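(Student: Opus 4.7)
The plan is to apply Theorem~\ref{thm characterisation CC CM} in both directions, reducing each claim to a statement about the sub-coset-complexes $\CC{\operatorname{Alt}_5}{\mcH'}$ for $\mcH' \subseteq \mcH$.

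First I would dispose of the proper subsets. Singletons give non-empty complexes and thus satisfy the $(-1)$-acyclicity / $(-1)$-connectivity condition trivially. For a two-element subset $\mcH' = \{H_i, H_j\}$, the coset complex is a bipartite graph which is connected precisely when $\langle H_i, H_j\rangle = \operatorname{Alt}_5$. Since $H_1 \cong \operatorname{Alt}_4$, $H_2 \cong D_5$ and $H_3 \cong D_3$ are three pairwise distinct maximal subgroups of $\operatorname{Alt}_5$ (of different orders $12$, $10$ and $6$), any pair among them generates the group, so each of the three pair-graphs is connected. This reduces both parts of the lemma to the single question of what happens at $\mcH' = \mcH$: Cohen--Macaulayness over $\mathbb{Z}$ becomes $1$-acyclicity of $X \coloneqq \CC{\operatorname{Alt}_5}{\mcH}$, while homotopy Cohen--Macaulayness becomes simple-connectedness of $X$.

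For these remaining assertions I would invoke the known structure of $X$ as Oliver's $\operatorname{Alt}_5$-complex: namely, that $X$ is $\mathbb{Z}$-acyclic and that $\pi_1(X)$ is non-trivial. The first property supplies $\tilde H_i(X; \mathbb{Z}) = 0$ for $i \leq 1$ and so establishes Cohen--Macaulayness over $\mathbb{Z}$ via Theorem~\ref{thm characterisation CC CM}(1); the second rules out the homotopy version via Theorem~\ref{thm characterisation CC CM}(2). I would cite \cite{Seg:Groupactionsfinite} and \cite[Section~7.3]{Lut:TriangulatedManifoldsFew}, where both properties are established (and where an explicit triangulation of $X$ is given).

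The main obstacle is not the reduction via Theorem~\ref{thm characterisation CC CM}, which is routine once generation by pairs of the $H_i$ has been noted, but rather the substantive facts about Oliver's complex itself -- its $\mathbb{Z}$-acyclicity together with the non-triviality of its fundamental group. These are the original content of Oliver's construction and I would import them from the cited papers rather than re-derive them here.
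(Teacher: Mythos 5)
Your proposal is correct and follows essentially the same route as the paper: reduce both claims to the subsets $\mcH'\subseteq\mcH$ via Theorem \ref{thm characterisation CC CM}, and import the $\mathbb{Z}$-acyclicity and non-trivial fundamental group of Oliver's complex from the literature. The only (minor, and arguably cleaner) divergence is that you verify connectivity of the two-element subcomplexes by noting that the $H_i$ are pairwise distinct maximal subgroups, hence pairwise generating, whereas the paper checks this by inspecting Lutz's explicit picture of the complex.
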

\begin{proof}
In \cite{Lut:TriangulatedManifoldsFew}, Lutz shows that $\real{\CC{\operatorname{Alt}_5}{\mcH}}$ is homeomorphic to a cell complex $Q$ obtained by taking the boundary of a dodecahedron and identifying opposite pentagons by a coherent twist of $\pi/5$. The complex $Q$ arises in triangulations of the Poincar\'e homology 3-sphere $\Sigma^3$. It is $\mathbb{Z}$-acyclic and one has $\pi_1(Q)\cong \pi_1(\Sigma^3)$ (see \cite[p. 57]{Bre:Introductioncompacttransformation}).
As this fundamental group is non-trivial, $Q$ and therefore $\CC{\operatorname{Alt}_5}{\mcH}$ cannot be homotopy Cohen--Macaulay.

It remains to show that $\CC{\operatorname{Alt}_5}{\mcH}$ is Cohen--Macaulay over $\mathbb{Z}$. By Theorem \ref{thm characterisation CC CM}, it suffices to show that for all $\mcH'\subseteq \mcH$, the complex $\CC{\operatorname{Alt}_5}{\mcH'}$ is $(|\mcH'|-2)$-acyclic. For $\mcH'=\mcH$, this is true as $Q$ is $\mathbb{Z}$-acyclic and for $|\mcH'|=1$, there is nothing to show. Hence, one only needs to check that for all two-element subsets $\mcH'$ of $\mcH$,
the corresponding subcomplex of $\CC{\operatorname{Alt}_5}{\mcH}$ is connected. This can easily be verified, e.g. by using Figure 7.5 of \cite{Lut:TriangulatedManifoldsFew}.
\end{proof}

A further question in the same direction which might be interesting to consider is whether every coset complex that is \emph{homotopy} Cohen--Macaulay is already shellable. A counterexample to that (if existent) would have to be a pure, completely balanced simplicial complex with a facet-transitive group action that is homotopy Cohen--Macaulay but not shellable. To us, it seems likely that such a complex exists but we are currently not aware of any examples.

\section{Applications}
\label{sec:applications}
In what follows, we give three applications of Theorem \ref{thm higher generation by CM}. All of them are either directly or in spirit connected to the theory of buildings.
The first two examples make direct use of this theory; definitions and background material needed for these subsections can be found in \cite{AB:Buildings}. The third application concerns higher generating families of subgroups in $\Aut(F_n)$, the automorphism group of the free group.

\subsection{Parabolic subgroups and buildings}
Our first application recovers \cite[Theorem 3.3]{AH:Highergenerationsubgroups} of Abels and Holz. We will be brief here and refer to their text for further details.

Let $G$ be a group with a BN-pair, denote by $\Delta$ the corresponding building and by $Ch(\Delta)$ the set of its chambers. If the corresponding Weyl group $W$ has rank $r$, this building is homotopy equivalent to a wedge of $(r-1)$-spheres by the Solomon--Tits theorem (see \cite{Sol:Steinbergcharacterfinite}); it is in fact contractible if $W$ is infinite. The link of a simplex of dimension $k$ in $\Delta$ is again a building of rank $r-k-1$ which implies that $\Delta$ is homotopy Cohen--Macaulay. 

The action of $G$ is transitive on the chambers of $\Delta$, so we can apply Theorem \ref{thm higher generation by CM} to deduce that for any choice of chamber $C\in Ch(\Delta)$, the family $\mcP_k$ of stabilisers of the $k$-dimensional faces of $C$ is $(r-1-k)$-generating for $G$. If we take $C$ to be the ``fundamental'' chamber associated to the Borel subgroup $B$, these stabilisers are exactly the \emph{standard parabolic subgroups of rank $r-k-1$}. Hence we get:
\begin{theorem}[{\cite[Theorem 3.3]{AH:Highergenerationsubgroups}}]
The family of rank-$m$ standard parabolic subgroups is $m$-generating for $G$. 
\end{theorem}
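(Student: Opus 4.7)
The plan is to apply Theorem~\ref{thm higher generation by CM} to the action of $G$ on its building $\Delta$, using the structural facts about $\Delta$ already collected in the paragraph preceding the statement. The main thing to check is that the hypotheses of that theorem are met and that the indexing lines up correctly.

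First I would verify that $\Delta$ is homotopy Cohen--Macaulay of dimension $d = r - 1$. The Solomon--Tits theorem gives that $\Delta$ itself is $(r-1)$-spherical, hence in particular $(d-1)$-connected. For the link condition, I use the standard fact that the link of a $k$-simplex of $\Delta$ is again a building, of rank $r - k - 1$; applying Solomon--Tits to this smaller building yields $(r-k-2)$-sphericity, i.e.\ $(d-k-2)$-connectedness. Together these are precisely the homotopy Cohen--Macaulay conditions required by Theorem~\ref{thm higher generation by CM}.

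Next I would set up the fundamental domain. Since $G$ acts chamber-transitively on $\Delta$, any chamber $C$ is a single-facet fundamental domain for the action. Taking $C$ to be the fundamental chamber associated to the Borel subgroup $B$, the stabilizer in $G$ of a $k$-face $\sigma \subseteq C$ is the standard parabolic subgroup generated by $B$ together with the reflections indexed by the vertices of $C$ not lying in $\sigma$; this parabolic has rank $r - k - 1$. Hence the family $\mcP_k$ appearing in Theorem~\ref{thm higher generation by CM} is exactly the family of standard parabolic subgroups of rank $r - k - 1$.

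Finally I would match the indices. Theorem~\ref{thm higher generation by CM} tells us that $\mcP_k$ is $(d-k)$-generating. To extract the statement for rank-$m$ standard parabolics, set $k = r - m - 1$, so that $d - k = (r-1) - (r-m-1) = m$, yielding $m$-generation as required. The only non-formal input in this argument is the verification of the homotopy Cohen--Macaulay property of $\Delta$, which is the step I would expect to be the ``hard'' part; however, this is classical, following at once from Solomon--Tits together with the self-similar link structure of buildings. Everything else is bookkeeping.
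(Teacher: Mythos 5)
Your proposal is correct and follows exactly the same route as the paper: verify that $\Delta$ is homotopy Cohen--Macaulay via the Solomon--Tits theorem applied to $\Delta$ and to the links (which are buildings of rank $r-k-1$), use chamber-transitivity to get a single-facet fundamental domain, identify the face stabilisers of the fundamental chamber with the standard parabolics, and apply Theorem~\ref{thm higher generation by CM} with the index shift $k = r-m-1$. The only difference is that you spell out the link-connectivity computation and the rank bookkeeping slightly more explicitly than the paper does.
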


\subsection{Levi subgroups and the opposition complex}
To show that the families of standard parabolic subgroups in a group $G$ with a BN-pair are higher generating, we only needed to use chamber-transitivity of the action of $G$ on the associated building. However, this action is known to satisfy stronger transitivity conditions; we will exploit them to find other families of higher generating subgroups in this subsection.

Let $\Delta$ be a spherical building. The chamber distance $d(-,-)$ induces an \emph{opposition relation} $\operatorname{op}$ between chambers of $\Delta$ which is defined by
\begin{equation*}
C\operatorname{op} C' :\Leftrightarrow d(C,C')=\max\set[d(C_1,C_2)\mid C_1,C_2\in Ch(\Delta)].
\end{equation*}
This opposition relation can be extended to arbitrary simplices $\sigma,\sigma'\in\Delta$ of equal dimension by saying that $\sigma$ is opposite to $\sigma'$ if and only if the following holds true:

\begin{center}
\begin{minipage}{0.85\textwidth}
\emph{For every chamber $C\geq \sigma$ in $\Delta$, there is a chamber $C'\geq\sigma'$ such that $C\op C'$ and for every chamber $C'\geq \sigma'$, there is a chamber $C\geq\sigma$ such that $C\op C'$.}
\end{minipage}
\end{center}

\noindent Using this opposition relation, one can define a new complex from $\Delta$ as follows:
\begin{definition}
The \emph{opposition complex} \Opp\, is the simplicial complex whose simplices are of the form $(\sigma,\sigma')$ with $\sigma,\sigma'\in\Delta$, $\sigma\op\sigma'$
and where the face relation is given by
\begin{equation*}
(\tau,\tau')\leq (\sigma,\sigma'):\Leftrightarrow\tau\leq\sigma\text{ and } \tau'\leq\sigma'.
\end{equation*}
\end{definition}

\Opp\, has the same dimension as $\Delta$ and it was shown to be homotopy Cohen--Macaulay by von Heydebreck in \cite{vHey:Homotopypropertiescertain}. The complex is pure and its facets are given by pairs $(C,C')$ of opposite chambers $C,C'\in Ch(\Delta)$.

Every building $\Delta$ comes with a map
\begin{align*}
\delta : Ch(\Delta) \times Ch(\Delta)\to W,
\end{align*}
where $W$ is the Weyl group of $\Delta$. This function is called the \emph{Weyl distance function} (of $\Delta$) and it is related to the gallery distance as follows:
\begin{align*}
d(C,C')= l_S(\delta(C,C')),
\end{align*}
where $l_S$ denotes the Coxeter length function on $W$. If a group acts by type-preserving automorphisms on $\Delta$, we say that the action is \emph{Weyl transitive} if for each $w\in W$, the action is transitive on the set of order pairs of chambers $(C,C')$ with $\delta(C,C')=w$.

\begin{theorem}
\label{thm higher generation opposition complex}
Let $G$ be a group acting Weyl transitively by type-preserving automorphisms on a spherical building $\Delta$ of dimension $d$. Choose any pair $(C,C')$ of opposite chambers $C,C'\in Ch(\Delta)$. Then the set
\begin{equation*}
\mcP_\TheCodim\coloneqq\set[\Stab_G(\sigma)\cap \Stab_G(\sigma') \mid \sigma,\sigma' \text{ $\TheCodim$-dimensional faces of } C,C';\, \sigma\op\sigma']
\end{equation*}
is $(d-\TheCodim)$-generating for $G$.
\end{theorem}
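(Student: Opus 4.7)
My plan is to apply Theorem \ref{thm higher generation by CM} to the opposition complex $\Opp$. Three things need to be in place: a simplicial action of $G$ on $\Opp$, a single facet serving as a fundamental domain, and the homotopy Cohen--Macaulay property. The last is exactly what von Heydebreck establishes in \cite{vHey:Homotopypropertiescertain}, so I can take it as given. The action of $G$ on $\Delta$ visibly extends componentwise to $\Opp$ via $g \cdot (\sigma, \sigma') \coloneqq (g\sigma, g\sigma')$; this preserves the opposition relation because $G$ acts by type-preserving automorphisms and hence preserves chamber distance. Thus the substance of the argument is verifying the fundamental-domain condition for the facet $(C, C')$.

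For transitivity on facets, I would observe that the facets of $\Opp$ are exactly the pairs of opposite chambers, and that chamber opposition is equivalent to having Weyl distance $\delta(D, D') = w_0$, the longest element of $W$. Weyl transitivity of the $G$-action then provides precisely the transitivity of $G$ on these facets, as required.

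For the stronger part of the fundamental-domain condition---that distinct $k$-faces of $(C, C')$ lie in distinct $G$-orbits---I would argue as follows. Suppose $g \in G$ satisfies $g \cdot (\sigma_1, \sigma_1') = (\sigma_2, \sigma_2')$, where both faces are contained in $(C, C')$. Then $g \sigma_1 = \sigma_2$ and $g \sigma_1' = \sigma_2'$. Type-preservation forces $\sigma_1$ and $\sigma_2$ to have the same type, and since a chamber contains at most one face of any given type, $\sigma_1 = \sigma_2$; likewise $\sigma_1' = \sigma_2'$. Together with facet-transitivity, this gives the hypothesis needed for Lemma \ref{lem detecting CC} and hence for Theorem \ref{thm higher generation by CM}. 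Finally, the stabiliser of a $k$-face $(\sigma, \sigma')$ of $(C, C')$ is $\Stab_G(\sigma) \cap \Stab_G(\sigma')$ directly from the definition of the action on $\Opp$, so the resulting generating family is exactly $\mcP_k$ as stated.

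The main obstacle I expect is conceptual rather than computational: one has to recognise that \emph{Weyl} transitivity---rather than mere chamber transitivity---is the precise ingredient needed here, because facets of $\Opp$ are pairs of chambers at a specific Weyl distance $w_0$, and Weyl transitivity is exactly what converts chamber transitivity into transitivity on such pairs. Beyond that, the verification is a bookkeeping exercise using type-preservation.
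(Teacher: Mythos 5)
Your proposal is correct and follows essentially the same route as the paper: extend the action componentwise to $\Opp$, use Weyl transitivity together with the characterisation of opposite chambers via the longest element $w_S$ to get facet-transitivity, use type-preservation to separate the $G$-orbits of distinct faces of $(C,C')$, and then invoke Theorem \ref{thm higher generation by CM} with von Heydebreck's Cohen--Macaulayness result. The only cosmetic difference is that you verify the distinct-orbit condition directly for $k$-faces while the paper phrases it at the level of vertices, but the underlying argument is identical.
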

\begin{proof}
As the action of $G$ on $\Delta$ preserves distances and adjacency relations, it induces a simplicial action on \Opp\, given by
\begin{equation*}
g.(\sigma,\sigma')\coloneqq(g.\sigma,g.\sigma').
\end{equation*} 
We claim that the simplex $(C,C')\in\Opp$ is a fundamental domain for this action of $G$. Because $\Delta$ is spherical, its Weyl
group $W$ is finite and has a unique element $w_S$ of maximal length. Hence, two chambers $D,D'\in Ch(\Delta)$ are opposite to each other if and only if $\delta(D,D')=w_S$ and by Weyl transitivity, $G$ acts transitively on such pairs of opposite chambers. This implies that the set of vertices of $(C,C')$ contains a representative of each $G$-orbit of vertices in $\Opp$. Furthermore, the type of any vertex of the chamber $C$ is preserved by all the elements of $G$. Hence, no two distinct vertices of $(C,C')$ lie in the same $G$-orbit which proves that this facet is indeed a fundamental domain.

As a consequence, Theorem \ref{thm higher generation by CM} shows that the set $\mcP_k$ of stabiliser of $k$-simplices in $\Opp$ is $(d-k)$-generating. Since a $k$-simplex in $\Opp$ is a pair $(\sigma,\sigma')$ of $k$-simplices $\sigma,\sigma'\in \Delta$, this finishes the proof.
\end{proof}

In particular, the conditions of the preceding theorem are fulfilled in the following situation: If $G$ is a group having a BN-pair of rank $r$ with finite Weyl group $W=\nolinebreak\ll S \rr$, it acts Weyl transitively on the associated spherical building. The chambers associated to $B$ and $B^-=w_S B w_S$ are opposite to each other and after setting $C\coloneqq B$ and $C' \coloneqq B^-$, the family $\mcP_k$ defined in Theorem \ref{thm higher generation opposition complex} is the set of \emph{standard rank-$(r-k-1)$ Levi subgroups}. We state this as follows:

\begin{corollary}
\label{cor high gen Levi}
Let $(G,B,N,S)$ be a Tits system with finite Weyl group. Then the family of standard rank-$m$ Levi subgroups is $m$-generating for $G$.
\end{corollary}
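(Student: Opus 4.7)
The plan is to deduce the corollary as a direct application of Theorem \ref{thm higher generation opposition complex} to the spherical building $\Delta$ associated to the Tits system $(G,B,N,S)$. Write $r = |S|$, so $\Delta$ has dimension $d = r-1$ and is spherical since $W$ is finite. The Tits-system axioms guarantee that $G$ acts on $\Delta$ by type-preserving simplicial automorphisms; Weyl transitivity is immediate from the Bruhat decomposition $G = \bigsqcup_{w \in W} B w B$, which shows that $G$ acts transitively on ordered pairs of chambers $(C_1,C_2)$ with any prescribed value of the Weyl distance $\delta(C_1,C_2)$.

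Next I would fix the pair of opposite chambers. Take $C$ to be the fundamental chamber stabilized by $B$ and set $C' := w_S \cdot C$, where $w_S$ is the longest element of the finite group $W$; then $\Stab_G(C') = B^- := w_S B w_S^{-1}$ and $\delta(C,C') = w_S$, which is exactly the condition making $C$ and $C'$ opposite in $\Delta$. For each $0 \leq m \leq r-1$, set $k := r - m - 1$, so that $d - k = m$ and the $k$-dimensional faces of $C$ are in natural bijection with the subsets $J \subseteq S$ of cardinality $m$: the face $\sigma_J$ corresponding to $J$ has stabilizer $P_J = B W_J B$, the standard rank-$m$ parabolic. Inside the unique apartment containing $C$ and $C'$, there is a unique face $\sigma'_J \leq C'$ opposite to $\sigma_J$, with stabilizer the corresponding opposite standard parabolic $P_J^-$.

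Finally I would identify $\mcP_k$ with the family of standard rank-$m$ Levi subgroups. By a classical identity in the theory of Tits systems (see, e.g., \cite{AB:Buildings}), the intersection $\Stab_G(\sigma_J) \cap \Stab_G(\sigma'_J) = P_J \cap P_J^-$ is precisely the standard Levi subgroup $L_J$ of rank $m$; hence $\mcP_k = \set[L_J \mid J \subseteq S, \, |J| = m]$, and Theorem \ref{thm higher generation opposition complex} immediately yields the desired $m$-generation. The boundary case $m = r$ is trivial, since the only rank-$r$ Levi subgroup is $G$ itself. The only step with real content beyond unwinding definitions is this identification of the intersection-of-stabilizers with the Levi subgroup; this is a well-known consequence of the Bruhat decomposition, so I would not expect it to be a serious obstacle.
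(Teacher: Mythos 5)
Your proposal is correct and follows the same route as the paper: both deduce the corollary from Theorem \ref{thm higher generation opposition complex} applied to the spherical building of the Tits system, taking $C$ to be the chamber fixed by $B$ and $C'$ its opposite fixed by $B^-=w_SBw_S^{-1}$, with Weyl transitivity coming from the Bruhat decomposition and the stabiliser intersections $P_J\cap P_J^-$ identified with the standard Levi subgroups. The extra details you supply (the bijection between $k$-faces and subsets $J\subseteq S$ with $|J|=m$, and the identity $P_J\cap P_J^-=L_J$) are exactly the steps the paper leaves implicit.
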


\begin{example}
As an illustration, we spell out the following special case of this result:
If $\Delta$ is the flag complex of proper subspaces of the vector space $\mathbf{k}^{\TheRank}$, i.e. a building of type $A_{n-1}$, the opposition complex \Opp\, is the complex with vertex set
\begin{equation*}
\set[(U,U') \mid U,\,U'\text{ are proper subspaces of }\mathbf{k}^{\TheRank}\text{ and } U\oplus U'=\mathbf{k}^{\TheRank}]
\end{equation*} 
in which $(U_0,U_0'),\ldots ,(U_k,U_k')$ form a simplex if and only if 
 (possibly after reordering), one has $U_0<U_1<\ldots <U_k$ and $U'_0>U'_1>\ldots >U'_k$.

Let $\set[e_1,\ldots, e_n]$ be the standard basis of $\mathbf{k}^n$. The flags 
\begin{align*}
&C\coloneqq \ll e_1 \rr < \ll e_1, e_2 \rr < \ldots < \ll e_1,\ldots, e_{n-1} \rr \text{ and }\\
& C'\coloneqq \ll e_2,\ldots, e_{n} \rr > \ll e_3,\ldots, e_{n} \rr > \ldots > \ll e_n \rr
\end{align*}
form opposite chambers of $\Delta$. 
The building $\Delta$ has dimension $n-2$ and $\GL{\mathbf{k}}$ acts Weyl transitively on it. The corresponding family of stabilisers $\mcP_k$ with $0\leq k\leq n-3$ consists of all subgroups of the form
\begin{equation*}
\begin{pmatrix}
\GL[n_1]{\mathbf{k}} 	&0	&\cdots	&0\\
0	&\GL[n_2]{\mathbf{k}}	&	&\vdots\\
\vdots	&	&\ddots	&0\\
0	&	\cdots&	0&\GL[n_{k+2}]{\mathbf{k}}\\
\end{pmatrix}\leq \GL{\mathbf{k}}.
\end{equation*}
So the number of blocks in the corresponding matrices is $k+2$ and the $n_i$ are natural numbers 
such that $\sum_{i=1}^{k+2} n_i=n$.
These are exactly the standard rank-$(\TheRank-2-k)$ Levi subgroups of $\GL{\mathbf{k}}$ and by Theorem \ref{thm higher generation opposition complex}, this family is $(n-2-k)$-generating.

\end{example}

\subsection{Parabolics in Aut($F_n$) and the free factor complex}
Hatcher and Vogtmann in \cite{HV:complexfreefactors} defined a simplicial complex associated to $\Aut(F_n)$, the automorphism group of the free group on $n$ letters. It is similarly defined as and shares many properties with the building associated to $\GL[n]{\mathbb{Z}}=\Aut(\mathbb{Z}^n)$.

\begin{definition}\label{Def FC}
A subgroup $A$ of $F_n$ is called a \emph{free factor}, if there is a subgroup $B\leq F_n$ such that $F_n$ can be written as a free product $F_n=A\ast B$.

The \emph{free factor complex} \FC\, is the simplicial complex whose vertices are proper free factors of $F_n$ and where the vertices $H_0,\ldots ,H_k$ form a simplex if and only if they form a flag $H_0\leq H_1\leq \ldots \leq H_k$.
\end{definition}

\FC\, is a chamber complex of dimension $\TheRank -2$ and comes with a simplicial action of $\Aut(F_n)$ given by $g.(H_0<H_1<\ldots <H_k)\coloneqq g(H_0)<g(H_1)<\ldots <g(H_k)$. 
A fundamental domain for this action is given by any maximal flag $H_0<\ldots <H_{\TheRank-2}$ of free factors in $F_\TheRank$.

\begin{definition}
\label{def parabolics in Aut(Fn)}
Fix a basis $b_1,\ldots,b_n$ of $F_n$. This gives rise to a ``standard-flag''
\begin{equation*}
C\coloneqq \ll b_1 \rr<\ll b_1,b_2 \rr<\ldots < \ll b_1,\ldots,b_{n-1}\rr
\end{equation*}
of free factors in $F_\TheRank$.
Now analogously to the situation in buildings, we define a \emph{standard rank-\TheDim\, parabolic subgroup} 
to be the stabiliser of a sub-flag of $C$ that has length $\TheRank-\TheDim-1$.
To match the numbering of Section \ref{sec Higher generation and CM}, we use the corank to number the parabolic subgroups and define $\mcP_{\TheRank-\TheDim-2}$ to be the set of standard rank-$m$ parabolics.
\end{definition}

Again, we use Theorem \ref{thm higher generation by CM} to show:

\begin{theorem}
\label{cor higher generation Aut}
For all $1\leq\TheDim\leq\TheRank-2$, the family $\mcP_{\TheRank-\TheDim-2}$ of standard rank-$\TheDim$ parabolic subgroups is $\TheDim$-generating for $\Aut(F_n)$. The corresponding coset complex $\CC{\Aut(F_n)}{\mcP_{\TheRank-\TheDim-2}}$ is $\TheDim$-spherical.
\end{theorem}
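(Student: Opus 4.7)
The plan is to apply Theorem \ref{thm higher generation by CM} to the action of $\Aut(F_n)$ on the free factor complex $\FC$, which has dimension $d = n-2$. To invoke that theorem I need to verify two hypotheses: that the standard flag $C$ from Definition \ref{def parabolics in Aut(Fn)} is a fundamental domain, and that $\FC$ is homotopy Cohen--Macaulay. Granted both, the conclusion is immediate by setting $k = n-m-2$.

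For the fundamental domain claim, I first argue facet transitivity. A facet of $\FC$ is a maximal flag $H_0 < H_1 < \ldots < H_{n-2}$ of proper free factors of $F_n$, and maximality forces $\operatorname{rk}(H_i) = i+1$. Inductively, a basis $b_1, \ldots, b_n$ of $F_n$ can be built with $H_i = \ll b_1, \ldots, b_{i+1}\rr$: at each step, $H_{i-1}$ is a free factor of $H_i$ of corank one, so $H_i = H_{i-1} \ast \ll b_{i+1}\rr$ for some primitive $b_{i+1}$, and likewise $H_{n-2}$ is a corank-one free factor of $F_n$ producing $b_n$. The automorphism carrying this basis to the chosen standard basis maps the given flag to $C$. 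Next, since the rank of a free factor is preserved under $\Aut(F_n)$, no two distinct vertices of $C$ lie in the same orbit. Together these show that $C$ is a single-facet fundamental domain in the sense of Lemma \ref{lem detecting CC}.

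For the homotopy Cohen--Macaulay property of $\FC$, I invoke the work of Hatcher and Vogtmann \cite{HV:complexfreefactors}, who prove that $\FC$ is $(n-3)$-connected. The link in $\FC$ of a simplex corresponding to a flag $H_0 < \ldots < H_k$ splits as a join of free factor complexes of smaller rank: the portion ``below $H_0$'' is (isomorphic to) $\operatorname{FC}_{\operatorname{rk}(H_0)}$, the portion ``strictly between $H_i$ and $H_{i+1}$'' identifies with the free factor complex of $H_{i+1}$ relative to the factor $H_i$ of the appropriate corank, and the portion ``above $H_k$'' similarly. Using that joins of spherical complexes are spherical of the expected dimension, an induction on $n$ based at the trivially Cohen--Macaulay case promotes the Hatcher--Vogtmann sphericality statement to the full homotopy Cohen--Macaulay property required here. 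This inductive verification is the one genuine technical step, and is the main obstacle; everything else is bookkeeping.

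With these hypotheses in place, Theorem \ref{thm higher generation by CM} applied with $k = n-m-2$ finishes the proof. Indeed the stabilizer in $\Aut(F_n)$ of any $(n-m-2)$-dimensional face of $C$ is, by Definition \ref{def parabolics in Aut(Fn)}, a standard rank-$m$ parabolic subgroup, so $\mcP_{n-m-2}$ coincides with the family of face stabilizers that the theorem produces. Therefore $\mcP_{n-m-2}$ is $(d-k)=m$-generating for $\Aut(F_n)$ and the coset complex $\CC{\Aut(F_n)}{\mcP_{n-m-2}}$ is $m$-spherical, as claimed.
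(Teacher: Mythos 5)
Your proposal follows the same route as the paper: identify the standard flag $C$ as a single-facet fundamental domain for the action of $\Aut(F_n)$ on $\FC$ and feed this into Theorem \ref{thm higher generation by CM} with $k=n-m-2$. The only divergence is in how the homotopy Cohen--Macaulay hypothesis is handled. The paper simply cites \cite[Section 4]{HV:complexfreefactors}, where Hatcher and Vogtmann establish the full homotopy Cohen--Macaulay property (not merely $(n-3)$-connectedness of $\FC$ itself), so no further work is needed. Your attempt to upgrade their sphericality statement by hand is the one place where your sketch is thinner than you acknowledge: the join factors of the link of a flag $H_0<\ldots<H_k$ that sit ``between $H_i$ and $H_{i+1}$'' or ``above $H_k$'' are \emph{relative} free factor complexes (free factors of $H_{i+1}$ containing $H_i$, resp.\ free factors of $F_n$ containing $H_k$), and these are not isomorphic to absolute free factor complexes of smaller rank, so their connectivity does not follow by a straightforward induction on $n$ from the wedge-of-spheres statement alone; it is a separate result, also proved by Hatcher--Vogtmann. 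Since the needed fact is in the cited reference, your proof is correct in substance, but you should either cite the link analysis of \cite{HV:complexfreefactors} directly or supply the relative connectivity argument rather than asserting it as routine bookkeeping. Your more detailed verification that $C$ is a fundamental domain (facet transitivity via change of basis, orbits separated by rank) is a welcome addition that the paper leaves implicit.
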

\begin{proof}
As noted above, $\Aut(F_n)$ acts on the the free factor complex with the facet $C=\ll b_1 \rr<\ll b_1,b_2 \rr<\ldots < \ll b_1,\ldots,b_{n-1}\rr$ as a fundamental domain.  The standard rank-$\TheDim$ parabolics are exactly the stabilisers of the $(\TheRank-\TheDim-2)$-faces of $C$. Since Hatcher and Vogtmann showed that $\FC$ is homotopy Cohen--Macaulay (see \cite[Section 4]{HV:complexfreefactors}), the statement is an immediate consequence of Theorem \ref{thm higher generation by CM}.
\end{proof}

\bibliographystyle{halpha}
\bibliography{bibliography}

\end{document}